\declaretheorem[name=Definition,style=definition,qed=$\dashv$,
numberwithin=section]{dfn}
\declaretheorem[name=Example,style=definition,sibling=dfn]{exm}
\declaretheorem[name=Theorem,style=plain,sibling=dfn]{tm}
\declaretheorem[name=Lemma,style=plain,sibling=dfn]{lem}
\declaretheorem[name=Remark,style=definition,sibling=dfn]{rem}
\declaretheorem[name=Claim,style=plain,numbered=no]{clm*}
\declaretheorem[name=Sublaim,style=plain,numbered=no]{sclm*}
\declaretheorem[name=Case,style=definition]{case}
\declaretheorem[name=Subcase,style=definition]{scase}
\declaretheorem[name=Subsubcase,style=definition]{sscase}
\newcommand{\CC}{\mathbb C}
\newcommand{\sub}{\subseteq}
\newcommand{\cross}{\times}
\newcommand{\inter}{\cap}
\newcommand{\om}{\omega}
\newcommand{\OR}{\mathrm{OR}}
\newcommand{\Hull}{\mathrm{Hull}}
\newcommand{\cut}{\backslash}
\newcommand{\Tt}{\mathcal{T}}
\newcommand{\Uu}{\mathcal{U}}
\newcommand{\rg}{\mathrm{rg}}
\newcommand{\dom}{\mathrm{dom}}
\newcommand{\ins}{\trianglelefteq}
\newcommand{\pins}{\triangleleft}
\newcommand{\crit}{\mathrm{cr}}
\newcommand{\rest}{\!\upharpoonright\!}
\newcommand{\com}{\circ}
\newcommand{\lh}{\mathrm{lh}}
\newcommand{\Ult}{\mathrm{Ult}}
\newcommand{\sats}{\models}
\newcommand{\J}{\mathcal{J}}
\newcommand{\ZFC}{\mathsf{ZFC}}
\newcommand{\pistol}{\P}
\newcommand{\es}{\mathbb{E}}
\newcommand{\mubar}{{\bar{\mu}}}
\newcommand{\kappabar}{{\bar{\kappa}}}
\newcommand{\eps}{\varepsilon}
\newcommand{\pred}{\mathrm{pred}}
\newcommand{\un}{\cup}
\newcommand{\core}{\mathfrak{C}}
\newcommand{\id}{\mathrm{id}}
\newcommand{\sq}{\mathrm{sq}}
\newcommand{\conc}{\ \widehat{\ }\ }
\DeclareMathOperator{\card}{card}
\DeclareMathOperator{\cof}{cof}
\newcommand{\lpole}{\left\lfloor}
\newcommand{\rpole}{\right\rfloor}
\newcommand{\univ}[1]{\lpole #1\rpole}
\newcommand{\tu}{\textup}
\newcommand{\lex}{\mathrm{lex}}
\newcommand{\dropset}{\mathscr{D}}
\newcommand{\exit}{\mathrm{ex}}
\newcommand{\playerII}{\mathrm{II}}
\newcommand{\strength}{\mathrm{str}}
\newcommand{\bkgd}{\mathsf{b}}
\begin{document}
\title{Reconstructing resurrection}
\author{Farmer Schlutzenberg}
\maketitle
\begin{abstract}
Let $R$ be an iterable weak coarse premouse and let $N$ be a premouse with Mitchell-Steel 
indexing, produced by a fully backgrounded $L[\es]$-construction of $R$. We identify and 
correct a problem with the process of resurrection used in the proof of iterability of $N$.\footnote{Some time after writing this note,
the author found an alternate fix to the problem presented here, and also to the analogous problem with the copying constuction,
which preserves tree order between $\Tt$ and the lift tree $\Uu$, under the assumption that the proper segments of the premouse $M$ satisfy standard condensation facts
(and it is hard to imagine a situation in which we can't assume this).
The lift $E^\Uu_\alpha$ of $E^\Tt_\alpha$ in the alternate fix can, however, be slightly less related to $E^\Tt_\alpha$
than in the standard lifting procedure, which it seems might be a disadvantage. The alternate fix is to appear.}
\end{abstract}

\section{Introduction}
The purpose of this note is to fill a small gap in the proof of iterability 
of inner models with Mitchell-Steel indexing built by $L[\es]$-construction with full background 
extenders (see \cite[\S 12]{fsit}).
Such 
constructions are used to build canonical 
inner models of set theory having large cardinals. They are important, for example, in calibrating 
the strength of determinacy axioms against large cardinal axioms. Such constructions are ubiquitous 
in inner model theory. Iterability requires roughly that natural ultrapowers of the model 
be wellfounded. It is essential to know that $L[\es]$-constructions produce iterable inner 
models, when they indeed do; iterability helps ensure the canonicity of the inner models 
constructed, and it implies that they possess basic fine structural properties (such as 
condensation), which are needed for the general theory.

The inner models under consideration are of the form $L[\es]$,
where $\es$ is a sequence of partial extenders $E$.
Consider a model $L[\es]$ built by a full background extender construction.
The iterability proof of \cite{fsit} relies on the  
\emph{resurrection} process described there, which lifts elements $E$ of $\es$ to
extenders $E^\bkgd$ of $V$. The (main) gap we discuss lies in this process: the process can fail, 
and 
in fact, for some $E$ it seems there is no obvious candidate for $E^\bkgd$, if one requires 
that $E^\bkgd$ be an extender of $V$.\footnote{Our modified resurrection process will produce a 
candidate $E^\bkgd$, and this may or may not be an extender of $V$, but even if it is,
it may not be produced in the manner described in \cite{fsit}.} In Example \ref{exm:problem} we 
describe a specific situation, low in the large cardinal hierarchy, in which the resurrection 
process fails. The problem is closely related 
to the wrinkles in the copying construction described in 
\cite[pp. 1624--1625]{outline} and \cite{copy_con}, pertaining to type 3 premice.\footnote{In fact, 
because the 
iterability proof uses the copying construction, the issues with the copying construction themselves
also arise, so we incorporate the corrections described in \cite{copy_con} here, though for 
convenience we have arranged the bookkeeping differently.}

The main 
content of the paper is the proof of Theorem \ref{thm:resurrection}, an iterability proof for a 
model built by a full background extender construction, assuming iterability of the background 
universe; the point is that the iterability proof uses a correct resurrection process. 
The fix to 
resurrection is similar in nature to how the wrinkles in the copying construction are dealt with in
\cite{outline} and \cite{copy_con}, 
but there are more details. In the modified resurrection process, resurrection can itself involve 
taking (finitely many) ultrapowers, and $E^\bkgd$ can be an extender of an iterate of $V$, 
instead of $V$ itself.\footnote{The same problem arises in the iterability proof of \cite{cmip}, i.e. 
the 
proof of 
\cite[Theorem 9.14]{cmip}. The author believes that the fix 
we describe here can be adapted to that context. However, in that context there is more work to do,
particularly because the \emph{statement} of \cite[Theorem 9.14]{cmip} itself depends on the 
notion of resurrection, and so as we will see, does not literally make sense.}
%***change: removed comment below
%``The problem with resurrection only arises with premice with Mitchell-Steel 
%indexing (as opposed to 
%Jensen indexing)''.
%due to overall changes in background construction for Jensen indexed anyway.

Aside from the main gap, there also appears to be a small problem with the 
definition of \emph{weak $n$-embedding} (see \cite[pp. 52--53]{fsit} and 
\cite[Definition 4.1]{outline}); these 
embeddings arise 
naturally in the iterability proof (of \cite[\S12]{fsit}). This problem was noticed by Steve 
Jackson, and is explained in \cite[\S2]{copy_con}. (The problem is just potential, in that we do 
not know of an explicit example which contradicts any standard theorems regarding weak 
$n$-embeddings.) To deal with this, we take \emph{weak $n$-embedding} 
to be defined as in \cite[\S2]{copy_con}. We won't discuss this issue any further here.

\subsection{Conventions and Notation}
Given a transitive structure $R=(M,\ldots)$ with universe $M$, we write $\univ{R}$ for $M$,
and $\J(R)$ for the rud closure of $R\un\{R\}$.
If $\kappa<\OR^R$ and $\card^R(\kappa)$ is the largest cardinal of $R$, $(\kappa^+)^R$ denotes 
$\OR^R$.

We take \emph{premouse} as defined in \cite{outline}; in particular, they have Mitchell-Steel 
indexing.
Let $P$ be a premouse.
Given $\alpha\leq\OR^P$, we write $P|\alpha$ for the 
initial segment of $P$ of ordinal height $\alpha$, and $P||\alpha$ for its passive counterpart. We 
write $F^P$ for the active extender of $P$, $\es^P=\es(P)$ for the extender sequence of $P$, 
excluding $F^P$, and $\es_+^P=\es_+(P)$ for $\es^P\conc F^P$. Given a short extender $E$, we write 
$\crit(E)$ for the critical point of $E$, $\nu(E)$ for the natural length of $E$, 
$\lh(E)$ for the length of the trivial completion of $E$ and $\strength(E)$ for the strength of 
$E$. Let $\Tt$ be an iteration tree. If $\lambda+1<\lh(\Tt)$ we write 
$\exit^\Tt_\lambda=M^\Tt_\lambda|\lh(E^\Tt_\lambda)$ (\emph{ex} for 
\emph{exit extender}).
%***dropped M^\Tt_\infty etc as it was unused

Given premice 
$P,Q$, and a 
fine structural embedding $\pi:P\to Q$, the phrase ``$\pi:P\to Q$'' conventionally indicates that, 
literally, $\dom(\pi)=\core_0(P)$. Recall that for type 3 premice $P$, $P^\sq$ denotes the 
squash of $P$, and has universe $\univ{P|\nu(F^P)}$ (and a predicate coding 
$F^P\rest\nu(F^P)$); see 
\cite[\S3]{fsit}. When $P$ is type 3, 
$\core_0(P)=P^\sq$, so embeddings $\pi$ as above do not act,
at least not directly, on elements of $P\cut\core_0(P)$. It seems that this convention probably 
helped to disguise one of the problems with which we deal here. From now on in this 
paper we 
display all 
domains and codomains literally, writing, for example, $\pi:\core_0(P)\to\core_0(Q)$, so as to keep 
the true domain of $\pi$ in mind. (However, we do use the convention that fine structural 
notions such as $\rho_1^P$, and fine structural ultrapowers, are literally computed over 
$\core_0(P)$.)

We take \emph{weak $n$-embedding} to be defined as in \cite[\S2]{copy_con}.

Other notation and terminology is standard and mostly follows \cite{outline}.

\section{Resurrection}\label{sec:resurrection}

We first define a fairly general kind of full background extender construction (\emph{nice} 
construction), which includes 
typical full background extender constructions in the literature. Then in 
\ref{exm:problem} below, we give a specific example of the problem with resurrection. After 
this we will sketch the fix to this problem, and then, in the proof of Theorem 
\ref{thm:resurrection}, give a (more or less) complete iterability proof incorporating the fix.
\begin{dfn}\label{dfn:fm}
 Let $M$ be an active premouse and $\kappa<\OR^M$. We say that $\kappa$ is \emph{finely measurable} 
in $M$ iff $\kappa=\crit(E)$ for some $M$-total extender $E\in\es_+^M$.
\end{dfn}

\begin{dfn}\label{dfn:nice_con}
A \textbf{weak-coarse-premouse (wcpm)} is a \emph{premouse} as defined in
\cite[Definition 1.1]{it}.\footnote{Of course we are using \emph{premouse} differently here. We do 
not use the term \emph{coarse premouse} because this is used differently 
in \cite{cmip}.}

Suppose $V=(\univ{V},\delta)$ is a wcpm.\footnote{This hypothesis just means that we work inside 
some wcpm. It is not intended to imply that $V\sats\ZFC$.}
For $\lambda\leq\delta+1$, a \textbf{nice construction (of length $\lambda$)} is a sequence 
$\left<N_\alpha\right>_{\alpha<\lambda}$ such that
\begin{enumerate}[label=\tu{(}\roman*\tu{)}]
 \item for each $\alpha<\lambda$, $N_\alpha$ is a premouse,
 \item $N_0=V_\om$,\footnote{Although we restrict to pure premice 
$N_\alpha$ here, this is not important; everything in the paper relativizes immediately to 
premice above some fixed set.}
 \item for each limit $\gamma<\lambda$, $N_\gamma=\liminf_{\alpha<\gamma}N_\alpha$, and
 \item for each $\alpha+1<\lambda$, $N_\alpha$ is 
$\om$-solid and either $N_{\alpha+1}=\J(\core_\om(N_\alpha))$ or there are $E,E^\bkgd$ such that
\begin{enumerate}
\item $N_{\alpha+1}=(N_\alpha,E)$,
\item $E^\bkgd\in V_\delta$ is an extender,
 \item $E\rest\nu(E)\sub E^\bkgd$,
 \item for each $\kappa<\nu(E)$, if $\kappa$ is finely measurable in $\Ult(N_\alpha,E)$ then 
$\kappa<\strength(E^\bkgd)$.\qedhere
\end{enumerate}
\end{enumerate}
\end{dfn}

\begin{rem}\label{rem:construction_facts}
 Suppose $V$ is a wcpm and let $\CC=\left<N_\alpha\right>_{\alpha<\lambda}$ be a nice construction. 
Recall the following basic facts from \cite{fsit}, which we will use freely. Let 
$\alpha<\lambda$.

Let $\rho\leq\gamma<\OR^{N_\alpha}$ be such 
that $\rho$ is a cardinal of $N_\alpha$ and $\rho_\om(N_\alpha|\gamma)=\rho$. Then there is a 
unique $\xi<\alpha$ such that $\core_0(N_\alpha|\gamma)=\core_\om(N_\xi)$.

Let $E\in\es_+(N_\alpha)$ be such that $E$ is total over $N_\alpha$. Then $\crit(E)$ is 
measurable (in $V$).
\end{rem}

\begin{dfn}
Let $M,N$ be premice of the same type and let $\pi:\core_0(M)\to\core_0(N)$ be an 
$\Sigma_0$-elementary embedding. 
We define the embedding $\psi_\pi$ as follows. If $M$ is passive then $\psi_\pi=\pi$. Otherwise,
\[ \psi_\pi:\Ult(\core_0(M),F^M)\to\Ult(\core_0(N),F^N) \]
is the embedding induced by the Shift Lemma. Note that in all cases, $\pi\sub\psi_\pi$.
\end{dfn}
The following lemma is easy to see, by considering the ISC and $\psi_\pi$:
\begin{lem}\label{lem:pres_fm}
 Let $M,N$ be active premice, let $\pi:\core_0(M)\to\core_0(N)$ be a weak $0$-embedding and 
$\kappa\in\OR^{\core_0(M)}$. Suppose that $\Ult(M,F^M)$ and $\Ult(N,F^N)$ are wellfounded.
 Then $\kappa$ is finely measurable in $\Ult(M,F^M)$ iff $\kappa$ is finely measurable in 
$\Ult(\core_0(M),F^M)$ iff $\pi(\kappa)$ is finely measurable in
$\Ult(N,F^N)$.
\end{lem}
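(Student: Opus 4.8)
The plan is to prove the three equivalences by tracking how finely measurable points at $\kappa$ behave under the canonical maps relating the four ultrapowers $\Ult(M,F^M)$, $\Ult(\core_0(M),F^M)$, $\Ult(\core_0(N),F^N)$, and $\Ult(N,F^N)$. The first equivalence, ``$\kappa$ finely measurable in $\Ult(M,F^M)$ iff in $\Ult(\core_0(M),F^M)$'', is essentially internal to $M$: when $M$ is type 3, $\core_0(M)=M^\sq$ and $\Ult(M^\sq,F^M)$ is the squash of $\Ult(M,F^M)$ (modulo the usual unsquashing), so the two models have the same extender sequence below $\nu(F^{\Ult})$, and this is where the relevant total extenders live. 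When $M$ is type 1 or 2, $\core_0(M)=M$ and there is nothing to prove. The key point is that "finely measurable at $\kappa$" depends only on the part of the extender sequence that is common to the model and its squash, because a witnessing $M$-total extender with critical point $\kappa<\nu(E)$ has small support and is captured below the squash point; this is where "by considering the ISC" enters — the initial segment condition guarantees the witnessing extender (or its trivial completion) is itself on the sequence at a low index.

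For the middle-to-right equivalence I would use $\psi_\pi\colon\Ult(\core_0(M),F^M)\to\Ult(\core_0(N),F^N)$, the Shift-Lemma map, which by the displayed remark extends $\pi$ and is a weak $0$-embedding between these ultrapowers (this uses that $\pi$ is a weak $0$-embedding and the standard Shift Lemma). Then finely measurability is a property expressible by a first-order-over-$\Sigma_0$ statement about the extender sequence: $\kappa$ is finely measurable iff there is an index $\gamma$ and an extender $E$ on $\es_+$ at $\gamma$ with $\crit(E)=\kappa$ and $E$ total. Running this through $\psi_\pi$ in both directions — using that $\psi_\pi$ is $\Sigma_0$-elementary about the extender sequence, sends $\kappa\mapsto\pi(\kappa)$, and that being total and being an extender are preserved and reflected by such maps (here one uses agreement of the two sequences below the sup of the generators, and that $\psi_\pi$ is sufficiently elementary to transfer the ISC witness) — gives the equivalence. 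A mild care point: "total over the model" is a statement about the sup of the model's ordinals, so one must check $\psi_\pi$ respects that, which follows since $\psi_\pi$ maps $\OR$-cofinally enough / or one reduces totality to a bounded statement via the ISC.

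The main obstacle I expect is the interface between the squash and the copy map when $M,N$ are type 3: one has to be careful that "finely measurable at $\kappa$ in $\Ult(N,F^N)$" with $\kappa$ in the range of $\pi$ really is detected inside $\core_0(N)=N^\sq$ and is faithfully reflected by $\psi_\pi$, since $\psi_\pi$ nominally acts on the squashed ultrapower and does not directly see all of $\Ult(N,F^N)$. This is handled exactly as in the first equivalence: the witnessing total extender over $\Ult(N,F^N)$ with critical point $\kappa<\nu(F^{\Ult(N,F^N)})$ sits on the common part of the sequences of $\Ult(N,F^N)$ and its squash, so we may pull it down, and the wellfoundedness hypotheses on $\Ult(M,F^M)$ and $\Ult(N,F^N)$ ensure all these ultrapowers and their squash/unsquash transitions make sense. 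Everything else is routine elementarity bookkeeping, and I would present it as: (1) reduce each side to a statement about $\es_+$ of the relevant squashed ultrapower; (2) transfer along $\psi_\pi$; (3) note the unsquash step preserves and reflects finely measurability at points below the squash ordinal.
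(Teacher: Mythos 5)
The paper itself gives no proof of this lemma beyond the one-line pointer ``easy to see, by considering the ISC and $\psi_\pi$,'' and your sketch is an accurate and reasonable expansion of exactly that hint. You correctly identify the two ingredients: (i) the ISC reduces ``$\kappa$ is finely measurable'' to the existence of a total \emph{normal measure} on $\kappa$, which sits on the sequence at an index bounded by $(\kappa^{++})$ and hence lies in the common part of $\Ult(M,F^M)$ and its squash $\Ult(\core_0(M),F^M)$ (and likewise on the $N$ side), so the squash/unsquash passages are harmless; and (ii) the Shift-Lemma map $\psi_\pi$ extends $\pi$, is $\Sigma_0$-elementary about the extender predicate, and so transports this bounded witness in both directions. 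Your flagged ``mild care point'' about totality is the only place real content lives, and your suggestion to fall back on the ISC to make the statement bounded is the right move: since the only possible witness is the normal measure at the canonical index just past $(\kappa^+)$, totality is equivalent to a local statement about $P|\gamma$ and its ordinals, not a cofinality-of-$\OR^P$ statement, which is why a mere weak $0$-embedding suffices. So your route is essentially the paper's intended one, just spelled out.
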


\begin{exm}\label{exm:problem}
We now give an explicit example of the problem with resurrection, and sketch the fix we 
will use.

Suppose $V$ is a wcpm and $\CC=\left<N_\alpha\right>_{\alpha<\lambda}$ is a nice 
construction, every $\core_n(N_\alpha)$ is fully 
iterable and there is $\alpha$ such that $N_\alpha$ has a type 3 proper segment $M$ such 
that $M\sats$``$\cof(\nu(F^M))$ is measurable''. Let $\alpha$ be least such and $M$ the least such 
proper segment of $N_\alpha$.

We claim that $\rho_1^M=\om$ and $p_1^M=\emptyset$. For
let $H$ be the premouse such that $H^\sq$ is the transitive collapse of
$\Hull_1^{M^\sq}(\emptyset)$. Then $\rho_1^H=\om$, $H$ is $1$-sound,
and $0$-iterable, so a comparison shows that $H\ins M$. So it suffices to see that $H\sats$
``$\cof(\nu(F^H))$ is measurable''.
Note that $M\sats$``$\nu(F^M)=\kappa^{+\mu}$'', where $\kappa=\crit(F^M)$ and $\mu$ is the 
least measurable of $M$, and that if $\pi:\core_0(H)\to\core_0(M)$ is the uncollapse map,
then $\mu,\kappa\in\rg(\pi)$. Moreover, for each $\alpha<\mu$,
we have $\alpha\in\mu\inter\rg(\pi)$ iff $(\kappa^{+\alpha})^M\in\rg(\pi)$, since
the identity of $(\kappa^{+\alpha})^M$ is coded into a segment of $F^M$ in a $\Sigma_0$ way. 
Therefore $H\sats$``$\nu(F^H)=\kappabar^{+\mubar}$'',
where $\pi(\mubar,\kappabar)=\mu,\kappa$, which suffices.

It follows that $\alpha=\xi+1$ for some $\xi$, and letting $N=N_\xi$,
$\core_0(M)=\core_1(N)=\core_\om(N)$, and $N$ is 
active and type 3. Note that for all $0$-maximal iteration trees on $M$,
if $\alpha+1<\beta+1<\lh(\Tt)$ then either $\crit(E^\Tt_\alpha)=\crit(E^\Tt_\beta)$
or $\nu(E^\Tt_\alpha)<\crit(E^\Tt_\beta)$. So iterability at this level is very simple -- in particular,
for limit length $\Tt$ there is always exactly one $\Tt$-cofinal branch. Now there is 
a successor length $0$-maximal tree $\Tt$ on $M$ such that $N=M^\Tt_\infty$ and $b^\Tt$ does not 
drop in model. This can be seen in 
two ways: either because $M$ is 
below $0^\pistol$, or by the stationarity of $L[\es]$-constructions \cite[\S 
4]{scales_hybrid}. Moreover, the core embedding
$\upsilon:\core_0(M)\to\core_0(N)$
is $\upsilon=i^\Tt$.

Let $\nu=\nu(F^M)=\OR(M^\sq)$; and $\mu,\kappa$ were defined above. So
$\mu<\kappa$ and $\cof^M(\nu)=\mu$ and $i^\Tt(\kappa)=\crit(F^N)$ and  
$i^\Tt(\mu)$ is the least measurable
of $N$. So by \ref{rem:construction_facts}, $i^\Tt(\mu)$ is measurable in $V$,
so $\mu<i^\Tt(\mu)$, so $\mu=\crit(i^\Tt)$.
Let $P=M^\Tt_1$. By the preceding remarks, 
$P=\Ult_0(M,U)$ (recall that this means that $P^\sq=\Ult(M^\sq,U)$) where 
$U\in\es^M$ is the 
normal measure on $\mu$, and $1\in b^\Tt$ and $\deg^\Tt(1)=0$.

We claim that $\nu(F^N)<\psi_{i^\Tt}(\nu)=\psi_\upsilon(\nu)$. This follows from \cite[Lemma 
2.11]{copy_con}, but here things are simpler, so we include the proof for self-containment. In $M$, 
let $f:\kappa\to\kappa$ be $f(\alpha)=\alpha^{+\mu}$. Then $\nu=[\{\kappa\},f]^M_{F^M}$. Let 
$j=i^\Tt_{0,1}$ (so $j:M^\sq\to P^\sq$). Then
\[ \nu(F^P)=\sup 
j``\nu=(j(\kappa)^{+\mu})^P<(j(\kappa)^{+j(\mu)})^{\Ult(P,F^P)}=[\{j(\kappa)\},j(f)]^P_{F^P}.
\]
Since also $\nu(F^N)=\sup i^\Tt``\nu$, the claim follows easily.
Therefore, since $\nu$ is a limit cardinal of $M$, we have ($\dagger$) 
$N||\OR^N\pins\psi_\upsilon(M|\nu)$.

Now the resurrection maps of \cite{fsit} are formed by composing core 
embeddings. In particular, if $M|\lambda\pins M$
is active, and we wish to resurrect this to find some backgrounded 
ancestral extender,
then according to \cite{fsit}, we should consider $\upsilon(M|\lambda)$,
then resurrect this structure with further core embeddings, as needed. But if 
$\nu<\lambda<\OR^M$, the first step here does not make
literal sense, since $M|\lambda\notin\dom(\upsilon)$.
Moreover, we can't correct this by lifting $M|\lambda$ with $\psi_\upsilon$, since by 
($\dagger$) we have $N||\OR(N)\pins Q$ where $Q=\psi_\upsilon(M|\lambda)$, and so standard facts 
about nice constructions show that $Q$ was never constructed by $\CC$. So the usual 
resurrection process, applied to $M|\lambda$, breaks down.\end{exm}

To solve this problem, in the proof of \ref{thm:resurrection} below, we will use approximately
the following 
approach. It 
is similar to how the wrinkles in the copying construction are dealt with in \cite{outline} and 
\cite{copy_con}. We continue with
the scenario above. Let $E^\bkgd$ witness
\ref{dfn:nice_con}
with respect to $N$. Then (see the proof to follow) there is $\tilde{Q}$ 
in $\Ult(V,E^\bkgd)$ and 
an elementary embedding $M|\lambda\to\tilde{Q}$, such that $\tilde{Q}$ \emph{is} constructed by 
$i_{E^\bkgd}(\CC)$. Thus, 
we can move into $\Ult(V,E^\bkgd)$ and continue the resurrection process there. In the example 
above, the same issue will not arise again (because of the minimality of $M$), but in the more 
general case it could. In the latter 
case we take another ultrapower, and so on. This 
procedure will terminate in finitely many steps, yielding a successful resurrection.
We next give a detailed iterability proof incorporating the fix to resurrection 
sketched 
here (or a slight variant thereof).\footnote{We will not actually define an explicit resurrection 
process, 
but instead fold the 
details directly into the iterability proof. Moreover, because we need to use background extenders 
$E^\bkgd$ as above in order to produce some form of resurrection, and there need not be a canonical 
choice of such $E^\bkgd$, there need not be a \emph{canonical} resurrection for a structure 
such as $M|\lambda$ above. However, using the methods to follow, it is easy enough to formulate an 
abstract notion of \emph{a resurrection} (of some initial segment of a model produced by a nice 
construction computed in a wcpm $R$), in terms of finite iteration trees on $R$.}

Usually one deals with $k$-maximal trees, or stacks thereof. However, it does not take 
much more work to give the iterability proof for the following more general class of trees (which 
includes stacks of $k$-maximal trees, and more), so it seems worthwhile doing so:

\begin{dfn}
Let $M$ be a $k$-sound premouse. We say that $\Tt$ is a \textbf{standard degree $k$ iteration tree 
on $M$} iff $\Tt$ satisfies the conditions described in \cite[\S5]{fsit}, 
except that we drop condition (3)
(that is, the condition ``$\alpha<\beta\implies\lh(E_\alpha)<\lh(E_\beta)$''), and strengthen the 
other 
clauses as follows. Let $M_\alpha=M^\Tt_\alpha$ and $E_\alpha=E^\Tt_\alpha$ and 
$\exit_\alpha=\exit^\Tt_\alpha$. For 
$\alpha\leq\beta<\lh(\Tt)$ and $\kappa<\OR^{M_\beta}$ we say that $[\alpha,\beta)$ is 
\textbf{$\kappa$-valid (for $\Tt$)} iff either $\alpha=\beta$, or
\[ 
(\kappa<\min_{\gamma\in[\alpha,\beta)}\nu(E^\Tt_\gamma))\ 
\&\ (\kappa^+)^{\exit_\alpha}=(\kappa^+)^{M_\beta}.\]
For 
$E\in\es_+^{M_\beta}$ we say that $[\alpha,\beta)$ is \textbf{$E$-valid (for $\Tt$)} iff 
$[\alpha,\beta)$ is $\crit(E)$-valid and either $\alpha=\beta$ or $E$ is 
$M_\beta$-total. We require 
that if $\pred^\Tt(\beta+1)=\alpha$ then $[\alpha,\beta)$ is $E_\beta$-valid. We also 
require that $M^{*\Tt}_{\beta+1}$ and $\deg^\Tt(\beta+1)$ be chosen as for $k$-maximal trees.

For an ordinal $\alpha$, we say that $M$ is \textbf{standardly $(k,\alpha)$-iterable} iff there is 
a winning strategy for player $\playerII$ in the iteration game on $M$ for standard degree 
$k$ trees of length at most $\alpha$.

A \textbf{putative standard tree} $\Tt$ is defined in terms of standard trees as usual
(that is, we make the same requirements except that if $\Tt$ has a last model then we do not 
require that it is wellfounded).
\end{dfn}

\begin{rem}\label{rem:standard}
We make a couple of remarks on standard iteration trees. See \cite[pp. 3--5]{jonsson} for more 
discussion; standard trees all meet the definition of \emph{iteration tree} used in 
\cite{jonsson}.
Let $\Tt$ be standard. If $\kappa<\OR^{M_\beta}$ then $[\beta,\beta)$ is trivially 
$\kappa$-valid. Let $\alpha<\beta$ and $\kappa$ be such that $[\alpha,\beta)$ is $\kappa$-valid. 
Then either $(\kappa^+)^{M_\beta}<\lh(E_\alpha)$ or $E_\alpha$ is type 2 and 
$(\kappa^+)^{M_\beta}=\lh(E_\alpha)$. For all $\gamma\in(\alpha,\beta)$, 
$(\kappa^+)^{M_\gamma}=(\kappa^+)^{M_\beta}<\lh(E_\gamma)$. Suppose further that
$\kappa=\crit(E)$ for some $M_\beta$-total extender $E\in\es_+^{M_\beta}$. Let $G$ be the type 1 
initial segment of $E$. Then $G\in\es^{M_\gamma}$ for all $\gamma\in(\alpha,\beta]$, and $G\in
\es(\Ult(\exit_\alpha,E_\alpha))$, and in particular, $\kappa$ is finely measurable in 
$\Ult(\exit_\alpha,E_\alpha)$ (assuming the latter is wellfounded).
\end{rem}
\begin{tm}\label{thm:resurrection}
Let $\theta\geq\om_1$ be a cardinal. Let $R$ be a wcpm\footnote{It is not particularly important 
that $R$ be a wcpm. We just need that iteration maps on $R$ for trees based on $V_{\delta^R}^R$ are 
sufficiently elementary, but we leave it to the reader to reduce the hypotheses.} and 
$\Sigma_R$ a \tu{(}partial\tu{)}
$(\theta+1)$-iteration strategy for $R$. Let $\CC\in R$ be such that $R\sats$``$\CC$ is a nice 
construction''. Let $\zeta<\lh(\CC)$ and $z\leq\om$.

If $\Sigma_R$ is total then $N_\zeta$ is $z$-solid and 
$\core_z(N_\zeta)$ is 
standardly $(z,\theta+1)$-iterable.

If $\Sigma_R$ is defined on all stacks of non-overlapping trees, then
$N_\zeta$ is $z$-solid and $\core_z(N_\zeta)$ is $(z,\cof(\theta),\theta+1)^*$-iterable.
\footnote{Recall that in the $(z,\mu,\theta+1)^*$-iteration game,
if in a single round a normal tree of length $\theta+1$ is produced,
with wellfounded models, then the entire game finishes and player II wins.}

%***change: added * to iterability (maybe remind reader)

If $\Sigma_R$ is defined on all non-overlapping trees, and if 
$N_\zeta$ is $z$-solid, then $\core_z(N_\zeta)$ is $(z,\theta+1)$-iterable.
\end{tm}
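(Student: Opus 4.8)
The plan is to prove all three conclusions, together with the $z$-solidity of $N_\zeta$, by a single induction on $\zeta$ with a subsidiary induction on $z$. In the first two cases the $z$-solidity of $N_\zeta$ is obtained by the standard solidity/condensation argument, whose witnesses reduce, by the facts about nice constructions recalled in \ref{rem:construction_facts}, to cores $\core_{z'}(N_{\zeta'})$ with $(\zeta',z')$ lexicographically below $(\zeta,z)$, and whose iterability the induction has already supplied in a form strong enough to run the comparisons in that argument; this last point fails under the weakest hypothesis on $\Sigma_R$, which is why the third conclusion takes $z$-solidity as a hypothesis. So I concentrate on the iterability assertions, which are all instances of one copying construction and differ only in the class of trees on $R$ into which the lift falls, hence in the form of $\Sigma_R$ needed to run it.

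Given a putative standard degree $z$ tree $\Tt$ on $P:=\core_z(N_\zeta)$ of length at most $\theta+1$ played against us, I would build, by recursion on $\lh(\Tt)$, a putative stack $\vec\Uu$ of non-overlapping trees on $R$ played by $\Sigma_R$ (note that since $\theta$ is a cardinal, $\vec\Uu$ will have total length at most $\theta+1$, so $\Sigma_R$ applies), together with, for each $\alpha<\lh(\Tt)$: an iterate $R_\alpha$ of $R$ (the last model of $\vec\Uu$ so far), with branch embedding $i_\alpha:R\to R_\alpha$ and $\CC_\alpha:=i_\alpha(\CC)$; an index $\zeta_\alpha<\lh(\CC_\alpha)$ and a degree $d_\alpha\le\om$; and a weak $d_\alpha$-embedding $\sigma_\alpha:\core_{d_\alpha}(M^\Tt_\alpha)\to\core_{d_\alpha}(N^{\CC_\alpha}_{\zeta_\alpha})$, with $(\zeta_0,d_0,\sigma_0)=(\zeta,z,\id)$, subject to the usual commutativity relating the $\sigma_\beta$ to the tree maps $i^\Tt_{\beta\gamma}$ and to the embeddings along $\vec\Uu$, and to the standard agreement requirements between $\sigma_\beta$ and $\sigma_\gamma$ below the relevant image generators. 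Then wellfoundedness of every $M^\Tt_\alpha$ is immediate: via $\sigma_\alpha$ it embeds into a level of a construction lying in the wellfounded model $R_\alpha$. So it suffices to carry the recursion through, at limit stages reading off a cofinal branch of $\Tt$ to play from the branch of $\vec\Uu$ selected by $\Sigma_R$.

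The successor step is where essentially all the new work lies. Suppose we have reached stage $\alpha$ and $\Tt$ uses the extender $E:=E^\Tt_\alpha$ of $\exit^\Tt_\alpha=M^\Tt_\alpha|\lh(E)$. I must resurrect $\exit^\Tt_\alpha$: produce an iterate $R_\alpha^+$ of $R_\alpha$ by a non-overlapping tree, an index for $i(\CC_\alpha)$ in $R_\alpha^+$, and a sufficiently elementary map $\tau_\alpha$ of $\exit^\Tt_\alpha$, compatible with $\sigma_\alpha$, into a resurrection of that level --- an active premouse whose active extender carries a background extender $F_\alpha\in R_\alpha^+$. First I run the resurrection of \cite{fsit} inside $R_\alpha$, composing core embeddings, as far as this literally makes sense. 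By Example~\ref{exm:problem} it can break down at a type 3 level $M\pins N^{\CC_\alpha}_\xi$ with $\cof^M(\nu(F^M))$ measurable: for $\nu(F^M)<\lambda$, $M|\lambda$ is not in the domain of the relevant core map $\upsilon$, and lifting it by $\psi_\upsilon$ instead overshoots, by $(\dagger)$, the top of the construction in question. At such a point I apply the fix of this note: with $E^\bkgd\in R_\alpha$ a background extender witnessing \ref{dfn:nice_con} for the level $N$ of $\CC_\alpha$ for which $\core_0(M)=\core_\om(N)$, pass to $\Ult(R_\alpha,E^\bkgd)$ --- a legitimate non-overlapping extension of the tree on $R$, as $E^\bkgd$ is total with measurable critical point (\ref{rem:construction_facts}) --- inside which $i_{E^\bkgd}(\CC_\alpha)$ does construct a structure $\tilde Q$ admitting an elementary embedding of $M|\lambda$, and continue the resurrection there. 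One checks that each such ultrapower strictly lowers a natural ordinal rank bounding the ``bad'' type 3 levels still lying above the part to be resurrected, so the process halts after finitely many of them and yields $R_\alpha^+$, $F_\alpha$, $\tau_\alpha$ and the index. I then set the extender of $\vec\Uu$ at this position to be $F_\alpha$ (following the inserted resurrection ultrapowers), put $R_{\alpha+1}=\Ult(R_\alpha^+,F_\alpha)$ with $\pred^{\vec\Uu}$ mirroring $\pred^\Tt(\alpha+1)$ together with the resurrection steps and the validity clauses, and define $\sigma_{\alpha+1}$ from $\tau_\alpha$ by the Shift Lemma, i.e.\ essentially as the $\psi$ of the relevant map; Lemma~\ref{lem:pres_fm} is exactly what verifies that clause~(iv) of \ref{dfn:nice_con} survives at $R_{\alpha+1}$, so that $\vec\Uu$ really is a stack of non-overlapping trees on $R$. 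When $M^\Tt_{\alpha+1}$ is the last, putative model, its wellfoundedness again follows from $\sigma_{\alpha+1}$ (the Shift Lemma producing this even when that model is not yet known to be wellfounded).

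At a limit $\gamma$, the part of $\vec\Uu$ built so far has limit length, so $\Sigma_R$ yields a cofinal branch $c$; the commutativity conditions force $c$ to determine a cofinal branch $b$ of $\Tt$ (the $\sigma_\alpha$ for $\alpha\in b$ cohere into a direct-limit system lying in $M^c$), and I play $b$; standard arguments give that $\deg^\Tt\rest b$ is eventually constant, that $M^\Tt_\gamma$ is wellfounded, and that the induced $\sigma_\gamma$ is a weak $d_\gamma$-embedding. Reading off the cases: when $\Tt$ is $z$-maximal, the genuinely new combinatorial point --- and the crux of the third conclusion --- is that $\vec\Uu$ is then actually a single non-overlapping tree on $R$, since the validity clauses of the definition of standard tree, together with the agreement built into the $\sigma$'s, force the $F_\alpha$ and the resurrection ultrapowers to thread onto one normal tree; so $\Sigma_R$ defined on all non-overlapping trees, with $z$-solidity of $N_\zeta$, gives that $\core_z(N_\zeta)$ is $(z,\theta+1)$-iterable. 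In the $(z,\cof(\theta),\theta+1)^*$-game each round is a normal tree on the premouse side, hence lifts to a non-overlapping tree on $R$, and the rounds stack, so $\Sigma_R$ on all stacks of non-overlapping trees suffices there; and a fully general standard degree $z$ tree lifts to a stack of non-overlapping trees, so a total $\Sigma_R$ gives that $\core_z(N_\zeta)$ is standardly $(z,\theta+1)$-iterable. The main obstacle is precisely the successor step: showing the resurrection-with-ultrapowers terminates, that it produces maps elementary enough to induce weak $d$-embeddings, and --- for the sharp first and third conclusions --- that it keeps $\vec\Uu$ inside the permitted class of trees on $R$.
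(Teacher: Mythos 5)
Your overall architecture is the paper's: lift $\Tt$ on $\core_z(N_\zeta)$ to a tree on $R$ via $\Sigma_R$, with weak $d_\alpha$-embeddings $\sigma_\alpha$ from $M^\Tt_\alpha$ into levels of the shifted construction $i_\alpha(\CC)$, and at each successor step resurrect $\exit^\Tt_\alpha$ by composing core maps and, when resurrection would overshoot at a type 3 level as in Example~\ref{exm:problem}, inserting an ultrapower of the current coarse model by a background extender and continuing inside the ultrapower. The termination in finitely many steps corresponds to the finite length of the paper's $(\gamma,k)$-model-dropdown sequence; the paper's $\OR\times\om$ indexing of $\Uu$, with $(\alpha,0)$ mirroring $M^\Tt_\alpha$ and $(\alpha,i)$, $i>0$, recording the inserted ultrapowers, is exactly your ``inserted resurrection steps.''  So the route is the same, even if you've omitted the heavy bookkeeping (the six inductive conditions, $\pi_{\alpha\kappa}$, $u_{\alpha\kappa}$, etc.) that makes the successor-step predecessor analysis go through.

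There are, however, a few points where what you've written does not quite work as stated.  First, you announce at the outset that for a \emph{general} standard degree $z$ tree $\Tt$ the lift $\vec\Uu$ is ``a putative stack of non-overlapping trees on $R$,'' and you repeat this when justifying the first conclusion.  If that were so, the first conclusion would already follow from $\Sigma_R$ being defined on stacks of non-overlapping trees, and the hypothesis that $\Sigma_R$ is \emph{total} would be redundant --- but the theorem deliberately charges a higher price for the first conclusion precisely because a general standard tree lifts only to a single tree on $R$ (possibly overlapping, in a way that does not resolve into a stack).  The paper lifts to one padded tree $\Uu$ and obtains non-overlap only when $\Tt$ is $z$-maximal, and only after a separate modification.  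Second, and relatedly, you attribute the non-overlap of $\Uu$ in the $z$-maximal case to ``the validity clauses'' and ``the agreement built into the $\sigma$'s''; this misses the essential extra move in the paper: when selecting $E^\Uu_{\alpha i}$, one may need to pass to a \emph{shorter} initial segment $Q'\ins Q_{\alpha i}$ cut at $\nu'=\sup\tau^{m0}_{\alpha i}\com\pi_{\alpha i}``\nu(F)$ (when $\nu(F)$ is a limit cardinal of $M_\alpha|\gamma_{\alpha i}$) and choose a background for $Q'$ of \emph{minimal strength}.  Without that modification the background extenders can be too strong and $\Uu$ can overlap even for $z$-maximal $\Tt$.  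Third, a smaller slip: you write $R_{\alpha+1}=\Ult(R_\alpha^+,F_\alpha)$, but the predecessor of the lifted node must mirror $\pred^\Tt(\alpha+1)$ (adjusted by the dropdown index $i=u_{\alpha\kappa}$), so the ultrapower is generally taken at an earlier model than $R_\alpha^+$, not at $R_\alpha^+$ itself; you gesture at this with ``$\pred^{\vec\Uu}$ mirroring $\pred^\Tt$,'' but the displayed definition of $R_{\alpha+1}$ contradicts it.  Finally, Lemma~\ref{lem:pres_fm} is used to propagate the \emph{strength} condition along $\Uu$ (the analogue of clause (iv) for the lifted extender), not to establish non-overlap; conflating these is harmless as a slogan but would mislead in a detailed write-up.
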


\begin{proof}
We mostly follow the usual proof, lifting trees on $N=\core_z(N_\zeta)$ to $R$ via copying and 
resurrection, but make modifications to deal with the problem described in \ref{exm:problem}.

Assuming that $\Sigma_R$ is defined on all trees of length $\leq\theta$, we will describe a 
strategy $\Sigma_N$ for player $\playerII$ in the 
standard $(z,\theta+1)$-iteration game 
on $N$. Let $\Tt$ be a putative standard degree $z$ tree on $N$ which is via $\Sigma_N$. 
Then by induction, we can lift $\Tt$ to a tree $\Uu=\pi\Tt$ on $R$ ($\Uu$ is to be 
defined), via $\Sigma_R$, and if $\Tt$ has limit length, use $\Sigma_R(\Uu)$ to define 
$\Sigma_N(\Tt)$. In particular, $\Tt$ has wellfounded models. At the end we will make some 
modifications to the construction which will ensure 
that $\Uu$ is non-overlapping if $\Tt$ is $z$-maximal.

We will have $\lh(\Uu)\geq\lh(\Tt)$, but in general may have $\lh(\Uu)>\lh(\Tt)$.
We index the nodes of $\Uu$ with 
elements of $\OR\cross\om$, which we order lexicographically.
For each node $\alpha$ of $\Tt$, $(\alpha,0)$ will be a node of $\Uu$, and the model
$M^\Uu_{\alpha 0}$ will 
correspond directly to $M^\Tt_\alpha$. However, there may also be a further \emph{finite} set of 
integers $i$ such that $(\alpha,i)$ is a node of $\Uu$, and then $M^\Uu_{\alpha i}$ is associated 
to a proper segment of $M^\Tt_\alpha$.
So if $\lh(\Tt)>1$ then the set of nodes of $\Uu$ will not be an 
initial segment of $\OR\cross\om$. For notational convenience we allow $\Uu$ to use padding. If 
$E=E^\Uu_{\alpha i}=\emptyset$ we 
consider $\strength^{M^\Uu_{\alpha i}}(E)=\OR(M^\Uu_{\alpha i})$.
Whether or not $E^\Uu_{\alpha i}=\emptyset$, we allow $\pred^\Uu(\beta,j)=(\alpha,i)$ for 
$(\beta,j)>(\alpha,i)$.

The following definition is a coarser variant of the notion of \emph{dropdown sequence} used in 
\cite[\S12]{fsit}; in a dropdown sequence one also records the various projecta 
$\rho_n^{M|\eta_i}$ in the 
interval 
$(\rho_\om^{M|\eta_i},\rho_\om^{M|\eta_{i-1}})$. At this stage we ignore these intermediate 
projecta. In the end we will index partial resurrection maps by potential critical points $\kappa$, 
not by projecta.

\begin{dfn}
 Let $M$ be a $k$-sound premouse and $\gamma\leq\OR^M$. Let $\rho(\OR^M)=\rho_k^M$ and 
for $\eta<\OR^M$ let $\rho(\eta)=\rho_\om(M|\eta)$. The 
\textbf{$(\gamma,k)$-model-dropdown 
sequence} of $M$ is the sequence $\sigma=\left<(\eta_i,\varrho_i)\right>_{i\leq n}$ of maximum 
length such 
that $\eta_0=\gamma$, and for each $i\leq n$, $\varrho_i=\rho(\eta_i)$, and if $i<n$ then 
$\eta_{i+1}$ is the least $\eta\in(\eta_i,\OR^M]$ such that $\rho(\eta)<\varrho_i$.
If $(\OR^M,\rho(\OR^M))\in\sigma$ then let $\tau=\emptyset$; otherwise let 
$\tau=\left<(\OR^M,\rho(\OR^M))\right>$.
The \textbf{extended $(\gamma,k)$-model-dropdown sequence} of $M$ is 
$\sigma\conc\tau$.\footnote{Note that these notions depend on $k$ as 
$\rho(\OR^M)=\rho_k^M$.}

Let $\sigma=(\sigma_0,\ldots,\sigma_{n-1})$ be a sequence. The \textbf{reverse} of 
$\sigma$ is 
$(\sigma_{n-1},\ldots,\sigma_0)$. If each $\sigma_i=(a_i,b_i)$ then 
$p_0[\sigma]=(a_0,\ldots,a_{n-1})$.
\end{dfn}

We now fix some notation and state some intentions. For $\alpha<\lh(\Tt)$ we write 
$M_\alpha=M^\Tt_\alpha$, $m_\alpha=\deg^\Tt(\alpha)$. For 
$\alpha+1<\lh(\Tt)$ we write $E_\alpha=E^\Tt_\alpha$, $\exit_\alpha=\exit^\Tt_\alpha$,
$M^*_{\alpha+1}=M^{*\Tt}_{\alpha+1}$. Fix $\alpha+1<\lh(\Tt)$. Let $\sigma$ be the 
extended $(\lh(E_\alpha),m_\alpha)$-model-dropdown sequence of $M_\alpha$ and let 
$\tilde{\sigma}$ be its reverse.
Let $u_\alpha+1=\lh(\sigma)$ and let $\left<\gamma_{\alpha i}\right>_{i\leq 
u_\alpha}=p_0[\tilde{\sigma}]$. So $\gamma_{\alpha 0}=\OR^{M_\alpha}$ and $\gamma_{\alpha 
u_\alpha}=\lh(E_\alpha)$.
Fix $i\leq u_\alpha$. Let $n_{\alpha i}=m_\alpha$ if $i=0$ and $n_{\alpha i}=\om$ otherwise.
Let $\CC_{\alpha i}=i^\Uu_{00,\alpha i}(\CC)$ and $\Delta_{\alpha i}=\lh(\CC_{\alpha i})$.
We will define $\xi_{\alpha i}<\Delta_{\alpha i}$, and letting 
$Q_{\alpha i}=N_{\xi_{\alpha i}}^{\CC_{\alpha i}}$, will define a weak
$n_{\alpha i}$-embedding 
\[ \pi_{\alpha i}:\core_0(M_\alpha|\gamma_{\alpha i})\to\core_{n_{\alpha i}}(Q_{\alpha i}). \]
For $m\leq n\leq n_{\alpha i}$ let
\[ \tau^{n m}_{\alpha i}:\core_n(Q_{\alpha i})\to\core_m(Q_{\alpha i}) \]
be the core embedding. Let $Q^\exit_\alpha=Q_{\alpha u_\alpha}$ and
\[ \pi^\exit_\alpha:\core_0(\exit_\alpha)\to\core_0(Q^\exit_\alpha), \]
where letting $n=n_{\alpha  u_\alpha}$,
\[ \pi^\exit_\alpha=\tau^{n 0}_{\alpha u_\alpha}\com\pi_{\alpha u_\alpha}.\]
Let 
$c_\alpha$ be the set of inaccessible cardinals $\kappa$ of $\exit_\alpha$ such that 
$\kappa<\nu(E_\alpha)$. Fix $\kappa\in c_{\alpha}$. Let $(\gamma,n_{\alpha \kappa})$ be the 
lexicographically least $(\gamma,n)$ such that $\gamma\geq\lh(E_\alpha)$ and either (i) 
$\gamma=\OR^{M_\alpha}$ and 
$n=m_\alpha$ or (ii) $\rho_{n+1}(M_\alpha|\gamma)\leq\kappa$. Note that $\gamma\in 
p_0[\tilde{\sigma}]$. 
Let $i=u_{\alpha \kappa}$ be such that $\gamma=\gamma_{\alpha i}$. We also define the
weak $n_{\alpha \kappa}$-embedding
\[ 
\pi_{\alpha \kappa}:\core_0(M^\Tt_\alpha|\gamma_{\alpha i})\to\core_{n_{\alpha \kappa}
}(Q_{\alpha i})\]
by $\pi_{\alpha \kappa}=\tau^{n m}_{\alpha i}\com\pi_{\alpha i}$,
where $n=n_{\alpha i}$ and $m=n_{\alpha \kappa}$.

If $\lh(\Tt)=\beta+1$ then $(\beta,0)$ will be the last node in $\Uu$, and we will also define 
$\xi_{\beta 0}<\Delta_{\beta 0}$, and letting 
$Q_{\beta 0}=N_{\xi_{\beta 0}}^{\CC_{\beta 0}}$, will define a
weak $m_\beta$-embedding
\[ \pi_{\beta 0}:\core_0(M^\Tt_\beta)\to\core_{m_\beta}(Q_{\beta 0}). \]

We will maintain the following conditions by induction on $\lh(\Tt)$:
\begin{enumerate}
\item\label{item:E_alpha,u_alpha_non-empty} For all $\alpha+1<\lh(\Tt)$, $E^\Uu_{\alpha 
u_\alpha}\neq\emptyset$.
\item\label{item:agreement} Let $\alpha+1<\lh(\Tt)$ and
$\kappa\in c_\alpha$. Let $\bar{\pi}=\pi_{\alpha\kappa}\rest(\kappa^+)^{\exit_\alpha}$. Then
\[ \bar{\pi}\sub 
\pi^\exit_\alpha\sub\pi_{\alpha+1,0}. \]
Suppose $\alpha<\delta<\lh(\Tt)$ and $[\alpha,\delta)$ is $\kappa$-valid. Then
\[ \bar{\pi}\sub\pi_{\delta 0}. \]
Given $(\eps,l,m)$ such that (i)
$(\alpha,u_{\alpha\kappa})\leq_\lex(\eps,l)\leq_\lex(\delta,0)$ and
(ii) $m\leq n=n_{\eps l}$ and (iii) if $(\alpha,u_{\alpha\kappa})=(\eps,l)$ then $m\leq 
n_{\alpha\kappa}$, we also have
$\bar{\pi}\sub\pi_{\eps\kappa}$ and  
$\bar{\pi}\sub\tau_{\eps l}^{nm}\com\pi_{\eps l}$.
\item\label{item:strength} Let $\alpha+1<\lh(\Tt)$ and
$U=\Ult(\exit_\alpha,E_\alpha)$ and $\kappa\in c_\alpha$.
Then $U$ is wellfounded. Suppose that $\kappa$ is finely measurable in 
$U$. Then 
$\pi_{\alpha \kappa}(\kappa)<\strength^{M^\Uu_{\alpha i}}(E^\Uu_{\alpha i})$
for all $i\in[u_{\alpha \kappa},u_\alpha]$.

\item\label{item:pred^U} Suppose $\alpha=\pred^\Tt(\beta)$ and let $i\leq u_\alpha$ be 
such that $M^*_{\beta}=M_\alpha|\gamma_{\alpha i}$
(so $\beta\in\dropset^\Tt$ iff $i\neq 0$). Then 
$(\alpha,i)=\pred^\Uu(\beta,0)$.

\item\label{item:drops} Suppose $(\alpha,i)=\pred^\Uu(\beta,j)$.
If $j\neq 0$ then
$\xi_{\beta j}< i^\Uu_{\alpha i,\beta j}(\xi_{\alpha i})$.
Suppose $j=0$; so $\alpha=\pred^\Tt(\beta)$. Then
$\xi_{\beta 0}= i^\Uu_{\alpha i,\beta 0}(\xi_{\alpha i})$
and letting $n=m_\beta$,
\[ \pi_{\beta 0}
\com i^{*\Tt}_{\beta}=
i^\Uu_{\alpha i,\beta 0}\com\tau_{\alpha i}^{n_{\alpha i} n}\com\pi_{\alpha i}.\]
\item\label{item:limits} Let $\lambda<\lh(\Tt)$ be a limit and let $\alpha<_\Tt\lambda$ be 
such that $(\alpha,\lambda]_\Tt$ does not drop in model. Then for all $\beta,i$, 
$(\alpha,0)\leq_\Uu(\beta,i)\leq_\Uu(\lambda,0)$ iff $i=0$ and 
$\alpha\leq_\Tt\beta\leq_\Tt\lambda$. Moreover, letting $m=m_\alpha$ and 
$n=m_\lambda$,
\[ \pi_{\lambda 0}\com 
i^\Tt_{\alpha,\lambda}=i^\Uu_{\alpha 0,\lambda 0}\com\tau^{m n}_{\alpha 0}\com\pi_{\alpha 0}. \]
\end{enumerate}

We set $\pi_{00}=\id$. The inductive hypotheses are trivial for $\Tt\rest 1$ and 
$\Uu\rest(0,1)$.

Now let $\lambda$ be a limit ordinal and suppose that the inductive hypotheses hold 
of $\Tt\rest\lambda$ and 
$\Uu\rest(\lambda,0)$; we will define $\Uu\rest(\lambda,1)$ and show that they hold for 
$\Tt\rest\lambda+1$ and $\Uu\rest(\lambda,1)$.

By hypothesis \ref{item:E_alpha,u_alpha_non-empty}, $\Uu\rest(\lambda,0)$ has limit 
length and is cofinally non-padded. Let 
$c=\Sigma_R(\Uu\rest(\lambda,0))$. Let $b=\Sigma_M(\Tt\rest\lambda)$ be the unique branch such that 
for 
eventually all $\alpha\in b$, we have $(\alpha,0)\in c$. By conditions
\ref{item:pred^U}--\ref{item:limits}, $b$ is indeed a well-defined $\Tt\rest\lambda$-cofinal 
branch, there are only finitely many drops in model along $b$, and there is a unique choice for 
$\pi_{\lambda 0}$ maintaining the commutativity (and all other) requirements.

Now let $\lambda=\delta+1$ and suppose that the inductive hypotheses hold for 
$\Tt\rest\delta+1$ and $\Uu\rest(\delta,1)$. We will show that they hold for $\Tt\rest\delta+2$ and 
$\Uu\rest(\delta+1,1)$.

\begin{case}\label{case:E_delta_is_active_ext} $E_\delta=F(M^\Tt_\delta)$.

We just give a sketch in this case as the details are mostly standard here, and anyway they are 
simpler than the next case. We have $u_\delta=0$. Set $E^{\Uu}_{\delta 0}$ to be some 
$E^\bkgd\in M^\Uu_{\delta 0}$ witnessing
\ref{dfn:nice_con} with respect to $Q_{\delta 0}$ (in $M^\Uu_{\delta 0}$, regarding $\CC_{\delta 
0}$). Let 
$\kappa=\crit(E_\delta)$ and 
$\alpha=\pred^\Tt(\delta+1)$ 
and $i=u_{\alpha\kappa}$. Note that 
$M^*_{\delta+1}=M_\alpha|\gamma_{\alpha i}$ and $n_{\alpha \kappa}=m_{\delta+1}$.
We have $\crit(E^\Uu_{\delta 0})=\pi^\exit_\delta(\kappa)$.

We claim that it 
is possible to set $\pred^\Uu(\delta+1,0)=(\alpha,i)$ (and we do set it so, as required by 
condition \ref{item:pred^U}). To see this we need to see that for every $(\eps,l)$ such that 
$(\alpha,i)\leq_\lex(\eps,l)<_\lex(\delta, 0)$,
we have $\crit(E^\Uu_{\delta 0})<\strength^{M^\Uu_{\eps l}}(E^\Uu_{\eps l})$. So suppose 
$\alpha<\delta$
and let $(\alpha,i)\leq(\eps,l)<(\delta,0)$.
Then $\kappa<\rho_{m_\delta}^{M_\delta}$ and
because $[\alpha,\delta)$ is 
$\kappa$-valid, condition \ref{item:agreement} gives
\begin{equation}\label{eqn:crit} \crit(E^\Uu_{\delta 0})=\pi^\exit_\delta(\kappa)=\pi_{\delta 
0}(\kappa)=\pi_{\alpha \kappa}(\kappa)=\pi_{\eps 
\kappa}(\kappa). \end{equation}
Let $G$ be the normal measure segment of 
$E_\delta$. Then $G\in\es(\Ult(\exit_\eps,E_\eps))$.
And because $[\alpha,\delta)$ is $\kappa$-valid, it is straightforward to see that if 
$\alpha<\eps$ then $u_{\eps\kappa}=0$.
So by condition \ref{item:strength} and line 
(\ref{eqn:crit}),
$\crit(E^\Uu_{\delta 0})<\strength^{M^\Uu_{\eps l}}(E^\Uu_{\eps l})$,
as required.

Now $\xi_{\delta+1,0}$ is determined by 
condition \ref{item:drops}, and we can go on to define $\pi_{\delta+1,0}$ as usual.
Standard calculations show that the 
inductive hypotheses are maintained (condition \ref{item:strength} for $\delta$ uses 
\ref{dfn:nice_con} and \ref{lem:pres_fm}).\end{case}

\begin{case} $E_\delta\neq F(M^\Tt_\delta)$.

In this case we must deal with the problem described in \ref{exm:problem}. Let 
$\sigma,\tilde{\sigma}$ 
be defined as before, with $\alpha=\delta$. Let $(\gamma,\rho)=(\tilde{\sigma})_1$. So $\gamma$ is 
the 
largest element of $p_0[\sigma]$ excluding $\OR^{M_\delta}$, and 
$\rho=\rho_\om^{M_\delta|\gamma}$ is a cardinal of $M_\delta$.

\begin{scase} $\rho\in\core_0(M_\delta)$.

Set $E^\Uu_{\delta 0}=\emptyset$; so $\pred^\Uu(\delta,1)=(\delta,0)$ and $M^\Uu_{\delta 
1}=M^\Uu_{\delta 0}$ and $i^\Uu_{\delta 0,\delta 1}=\id$.
Let
\[ \varphi:\core_0(M_\delta)\to\core_0(Q_{\delta 0}) \] 
be 
$\varphi=\tau^{m_\delta 0}_{\delta 0}\com\pi_{\delta 0}$. Then $\varphi(\rho)$ is a cardinal of 
$\core_0(Q_{\delta 0})$ and $\rho_\om(\varphi(M_\delta|\gamma))=\varphi(\rho)$. So by 
\ref{rem:construction_facts} we can let 
$\xi_{\delta 1}$ be the unique $\xi<\xi_{\delta 0}$ such that 
$\core_0(\varphi(M_\delta|\gamma))=\core_\om(N_\xi^{\CC_{\delta 1}})$, and let 
$\pi_{\delta 1}=\varphi\rest\core_0(M_\delta|\gamma)$.
\end{scase}

\begin{scase} $\rho\notin\core_0(M_\delta)$.

So $M_\delta$ is active type 3 and $\rho=\nu(F^{M_\delta})=\rho_0^{M_\delta}$. Let
$\upsilon:\core_0(M_\delta)\to\core_0(Q_{\delta 0})$ be 
$\upsilon=\tau^{m_\delta 0}_{\delta 0}\com\pi_{\delta 0}$. Let $\psi=\psi_\upsilon$.

\begin{sscase} $\psi(\rho)\leq\nu(F^{Q_{\delta 0}})$.

Proceed as in Subcase 1, using $\varphi=\psi$; in particular, $E^\Uu_{\delta 
0}=\emptyset$.
\end{sscase}

\begin{sscase}\label{sscase:special} $\psi(\rho)>\nu(F^{Q_{\delta 0}})$.

Here we need to do something different because $\psi(M_\delta|\gamma)$ is not constructed in 
$\CC_{\delta 0}$. Let $E^\Uu_{\delta 0}=E^\bkgd$ witness \ref{dfn:nice_con} with respect to 
$Q_{\delta 0}$. Let 
$F=F^{M_\delta}$ and $\kappa=\crit(F)$. Let $\Tt'$ be the putative standard tree on $M$ 
of the form $(\Tt\rest\delta+1)\conc\left<F\right>$, with $\alpha=\pred^{\Tt'}(\delta+1)$ as small 
as 
possible. Then $M_\delta|\gamma\pins M^{\Tt'}_{\delta+1}$. Let $i=u_{\alpha\kappa}$ and 
$n=n_{\alpha \kappa}$. Let $\pred^\Uu(\delta,1)=(\alpha,i)$; as in Case 1 this is possible. Let 
$Q'=i^{M^\Uu_{\alpha i}}_{E^\bkgd}(Q_{\alpha i})$ and
\[ \varphi:\core_0(M^{\Tt'}_{\delta+1})\to\core_n(Q')\]
be given as usual. Then $\rho<\rho_n(M^{\Tt'}_{\delta+1})$ so
$\varphi(\rho)<\rho_n(\core_n(Q'))$, and 
$\varphi(\rho)$ is a cardinal of $\core_n(Q')$ and 
$\varphi(\rho)=\rho_\om(\varphi(M_\delta|\gamma))$. 
Let $\xi_{\delta 1}$ be the unique $\xi<i^\Uu_{\alpha i,\delta 1}(\xi_{\alpha i})$ such that 
$\core_0(\varphi(M_\delta|\gamma))=\core_\om(N_\xi^{\CC_{\delta 1}})$. Let 
$\pi_{\delta 1}=\varphi\rest\core_0(M_\delta|\gamma)$.
\end{sscase}\end{scase}

This completes the definition of $\Uu\rest(\delta,2)$ in all subcases.

Suppose 
$\lh(E_\delta)=\gamma$.
Then $u_\delta=1$ and we let 
$E^\Uu_{\delta 1}$ witness \ref{dfn:nice_con} for $Q_{\delta 1}$.
We also have $\pi^\exit_\delta=\tau^{\om 0}_{\delta 1}\com\pi_{\delta 1}$.
Let $\kappa=\crit(E_\delta)$, so $\crit(E^\Uu_{\delta 
1})=\pi^\exit_\delta(\kappa)=\pi_{\delta\kappa}(\kappa)$.

Suppose $E^\Uu_{\delta 0}=\emptyset$. Let $\alpha=\pred^\Tt(\delta+1)$. We claim that we can set 
$\pred^\Uu(\delta+1,0)$ as required by condition \ref{item:pred^U}. For suppose $\alpha<\delta$.
Because $[\alpha,\delta)$ is $E_\delta$-valid, we have $\kappa<\rho_{m_\delta}^{M_\delta}$ and
\[ (\kappa^+)^{\exit_\alpha}=(\kappa^+)^{M_\delta}<\lh(E_\delta)=\gamma,\]
so $\kappa<\rho$.
So $\pi_{\delta 0}(\kappa)=\varphi(\kappa)=\pi_{\delta 1}(\kappa)$
and
$\varphi(\kappa)<\varphi(\rho)=\rho_\om(\varphi(M_\delta|\gamma))\leq
\crit(\tau^{\om 0}_{\delta 1})$,
so $\varphi(\kappa)<\crit(\tau^{\om 
0}_{\delta 1})$, so $\pi^\exit_\delta(\kappa)=\pi_{\delta 0}(\kappa)$.
The rest is much as in Case \ref{case:E_delta_is_active_ext}.

Now suppose $E^\Uu_{\delta 0}\neq\emptyset$, so Subsubcase \ref{sscase:special} attained;
in particular, $M_\delta$ is active type 3.
%Change: it earlier said ``so $M_\delta$ is active type 3'' later; I moved this comment up here where it belongs.
Note that 
$\crit(\tau^{\om 0}_{\delta 1})\geq\nu(F^{Q_{\delta 0}})$, so $\pi_{\delta\mu}$ and $\pi^\exit_\delta$ agree 
appropriately for each $\mu$. Suppose we want 
to set $\pred^\Uu(\delta+1,0)<(\delta,1)$. So
$\kappa<\nu(F^{M_\delta})=\rho<\gamma$, so $E_\delta$ is 
$M_\delta$-total, and
\[ \crit(E^\Uu_{\delta 1})=\pi_{\delta\kappa}(\kappa)=\upsilon(\kappa)<\nu(F^{Q_{\delta 0}}),\]
where $\upsilon$ is as above. 
So $\upsilon(\kappa)$ is finely measurable in $Q_{\delta 0}$,
so $\upsilon(\kappa)<\strength^{M^\Uu_{\delta 0}}(E^\Uu_{\delta 0})$.
So much as in Case 1, we can 
set $\pred^\Uu(\delta+1,0)$ appropriately. To see that condition \ref{item:strength} holds in this 
case, let $U=\Ult(\exit_\delta,E_\delta)$ and $\mu\in 
c_\delta$ and suppose that $0=u_{\delta\mu}$
and $E\in\es_+^U$ witnesses that $\mu$ is finely measurable in $U$. Then 
$\mu<\rho$ (as $0=u_{\delta\mu}$) and therefore the normal measure segment 
$G$ of $E$ is in $\es^{\exit_\delta}$, so $G$ witnesses that $\mu$ is 
finely measurable in $\Ult(M_\delta,F^{M_\delta})$.
It follows that both $E^\Uu_{\delta 0}$ 
and 
$E^\Uu_{\delta 1}$ are strong beyond $\pi_{\delta\mu}(\mu)$.
The other conditions are maintained 
as usual.

Now suppose that $\lh(E_\delta)<\gamma$. Let $\gamma_1=\gamma$ and
$(\gamma_2,\rho_2)=(\tilde{\sigma})_2$. Repeat the subcases, working with $M_\delta|\gamma_1$, 
$\rho_2$, 
$\pi_{\delta 1}$, etc, in place of $M_\delta$, $\rho$, $\pi_{\delta 0}$, etc. Continue in this 
manner until reaching some lift of $E_\delta$. This completes the 
definition of 
$\Uu\rest(\delta+2,1)$. Calculations as above maintain the inductive hypotheses.
\end{case}

This completes the proof for standard iterability. Now suppose that $N_\zeta$ is $z$-solid and 
$\Tt$ on $\core_z(N_\zeta)$ is $z$-maximal. We make the following adjustments to the preceding 
construction to ensure that $\Uu$ is non-overlapping.\footnote{$\Uu$ is padded;
non-overlapping here means that the equivalent non-padded tree is non-overlapping.
That is, for all $(\beta,j)+1<\lh(\Uu)$ and $(\beta',j')=\pred^\Uu((\beta,j)+1)$
and $(\alpha,i)$ with $E^\Uu_{\alpha i}\neq\emptyset\neq E^\Uu_{\beta j}$,
we have $(\beta',j')\leq(\alpha,i)$ iff $\crit(E^\Uu_{\beta j})<\strength^{M^\Uu_{\alpha i}}(E^\Uu_{\alpha i})$.} (The rest of the theorem follows as usual.) 
Things are almost as before; the main difference regards extender 
selection, which we now 
explain. Whether or not  
$E^\Uu_{\alpha i}\neq\emptyset$ is determined as before.
Suppose  $E^\Uu_{\alpha i}\neq\emptyset$. Let $F=F^{M_\alpha|\gamma_{\alpha 
i}}$ and $F^+=F^{Q_{\alpha i}}$.
If $\nu(F)$ is not a limit cardinal of $M_\alpha|\gamma_{\alpha i}$
then let $Q'=Q_{\alpha i}$. If $\nu(F)$ is a limit cardinal of $M_\alpha|\gamma_{\alpha i}$
then let $m=n_{\alpha i}$, let
\[ \nu'=\sup\tau_{\alpha i}^{m 
0}\com\pi_{\alpha i}``\nu(F) \]
(so $\nu'$ is a limit cardinal of $Q_{\alpha i}$ and 
$\nu'\leq\nu(F^+)$), let $F'=F^+\rest\nu'$, and let $Q'\ins 
Q_{\alpha i}$ be such that $F^{Q'}\rest\nu(F^{Q'})=F'$. Because $\nu'$ is a cardinal of $Q_{\alpha 
i}$, there is $\chi\leq\xi_{\alpha i}$ such that
$Q'=N_\chi^{\CC_{\alpha i}}$. Let $E^\Uu_{\alpha i}$ witness \ref{dfn:nice_con} with respect to 
$Q'$, with 
$\strength^{M^\Uu_{\alpha i}}(E^\Uu_{\alpha i})$ small as possible.

With this change, the foregoing proof still goes through essentially as before.
Moreover, we claim that $\Uu$ is 
non-overlapping.
For let $(\alpha,i)$ be such that $E^\Uu_{\alpha i}\neq\emptyset$ and
let $m=n_{\alpha i}$ and $F$ be as above. Our choice of $E^\Uu_{\alpha i}$ implies that
\[ \strength^{M^\Uu_{\alpha 
i}}(E^\Uu_{\alpha i})\leq\sup\tau^{m 0}_{\alpha i}\com\pi_{\alpha 
i}``\nu(F). \]
Using this, the $z$-maximality of $\Tt$, and the agreement condition 
\ref{item:agreement}, it is straightforward to verify that $\Uu$ is non-overlapping.
%***dropped this next footnote out, as it's obvious enough, other than that it doesn't always make sense due to domain problem
%\begin{comment}
%\footnote{If $i<u_\alpha$ and $\nu=\nu(F)$ then $\pi_{\alpha,i+1}(\nu)\geq\nu'=\sup\tau^{m0}_{\alpha i}\com\pi_{\alpha 
%i}``\nu\geq\strength^{M^\Uu_{\alpha i}}(E^\Uu_{\alpha i})$,
%which is enough. (In the former construction we would have 
%had $\pi_{\alpha,i+1}(\nu)>\nu(F^{Q_{\alpha i}})$, but it seems this might fail now).}
%\end{comment}

This completes the proof.
\end{proof}

\bibliographystyle{plain}
\bibliography{resurrection}

\end{document}